\newtheorem{theorem}{Theorem}[section]
\newtheorem{lemma}[theorem]{Lemma}
\newtheorem{corollary}[theorem]{Corollary}
\newtheorem*{remark}{Remark}
\newtheorem{conjecture}[theorem]{Conjecture}
\title{ON SOME PRODUCTS OF COMMUTATORS IN AN ASSOCIATIVE RING}
\author{GALINA DERYABINA}
\address{Department of Computational Mathematics and Mathematical Physics (FS-11), Bauman Moscow State Technical University, 2-nd Baumanskaya Street, 5, 105005 Moscow, Russia
\newline
galina\_deryabina@mail.ru}
\author{ALEXEI KRASILNIKOV}
\address{Departamento de Matem\'atica, Universidade de Bras\'\i lia, 70910-900 Bras\'\i lia, DF, Brasil
\newline
alexei@unb.br}
\date{}
\begin{document}

\maketitle

\begin{abstract}
Let $A$ be a unital associative ring and let $T^{(k)}$ be the two-sided ideal of $A$ generated by all commutators $[a_1, a_2, \dots , a_k]$ $(a_i \in A)$ where $[a_1, a_2] = a_1 a_2 - a_2 a_1$, $[a_1, \dots , a_{k-1}, a_k] = \bigl[ [a_1, \dots , a_{k-1}], a_k \bigr]$ $(k >2)$. It has been known that, if either $m$ or $n$ is odd then
\[
6 \, [a_1, a_2, \dots , a_m] [b_1, b_2, \dots , b_n] \in T^{(m+n-1)}
\]
for all $a_i, b_j \in A$. This was proved by Sharma and Srivastava in 1990 and independently rediscovered later (with different proofs) by various authors. The aim of our note is to give a simple proof of the following result: if at least one of the integers $m,n$ is odd then, for all $a_i, b_j \in A$,
\[
3 \, [a_1, a_2, \dots , a_m] [b_1, b_2, \dots , b_n] \in T^{(m+n-1)}.
\]
Since it has been known that, in general,
\[
[a_1, a_2, a_3] [b_1, b_2] \notin T^{(4)},
\]
our result cannot be improved further for all $m, n$ such that at least one of them is odd.
\end{abstract}

\noindent \textbf{2010 AMS MSC Classification:} 16R10, 16R40

\noindent \textbf{Keywords:} polynomial identity, commutator, Lie nilpotent associative ring

\section{Introduction}

Let $A$ be a unital associative ring. Define a left-normed commutator $[a_1, \dots , a_{k-1}, a_k]$ ($k >1$, $a_i \in A$ for all $i$) recursively as follows: $[a_1, a_2] = a_1 a_2 - a_2 a_1$, $[a_1, \dots , a_{k-1}, a_k] = \bigl[ [a_1, \dots , a_{k-1}], a_k \bigr]$ $(k >2)$. Let $T^{(k)} = T^{(k)} (A)$ be the two-sided ideal of $A$ generated by all commutators $[a_1, a_2, \dots , a_k]$ $(a_i \in A)$.

For each unital associative ring $A$, all $m, n \in \mathbb Z$, $m, n > 1$ and all $a_i, b_j \in A$, we have
\begin{equation}
\label{[m][n](mn2)}
[a_1, \dots , a_m] [b_1, \dots , b_n] \in T^{(m + n -2)}.
\end{equation}
This result was proved by Latyshev \cite[Lemma 1]{Latyshev65} in 1965 in an article published in Russian and independently rediscovered by Gupta and Levin \cite[Theorem 3.2]{GL83} in 1983.

If $m = 3$ then, by (\ref{[m][n](mn2)}), we have $[a_1,a_2,a_3][b_1, b_2, \dots , b_n] \in T^{(n+1)}$. Note that $T^{(n+1)} \supseteq T^{(n+2)}$. In 1985 Levin and Sehgal \cite[Lemma 2(a)]{LS85} proved that, for all $n > 1$ and all  $a_i, b_j \in A$,
\begin{equation*}
\label{[3][n]}
3 \, [a_1,a_2,a_3][b_1, b_2, \dots , b_n] \in T^{(n+2)} .
\end{equation*}
Earlier, in 1978, a similar result was proved by Volichenko  \cite[Lemma 1]{Volichenko78} in a preprint written in Russian. More recently, some particular cases of this result were independently rediscovered, with different proofs, in \cite[Theorem 3.4]{EKM09} and \cite[Lemma 1]{Gordienko07}.

In 1990 Sharma and Srivastava \cite[Theorem 2.8]{SS90} proved that if  $m, n \in \mathbb Z$, $m, n >0$ and at least one of  $m,n$ is odd then, for all $a_i, b_j \in A$,
\begin{equation}
\label{6[m][n]}
6 \, [a_1, \dots , a_m] [b_1, \dots , b_n] \in T^{(m+n-1)} .
\end{equation}
Recently this result was independently rediscovered, with different proofs, in \cite[Corollary 1.4]{BJ10} and \cite[Theorem 1]{GP15}.

Note that if $m = 2m'$, $n = 2n'$ both are even then the result similar to (\ref{6[m][n]}) does not hold: there exit associative algebras $A = A(m,n)$ over a field $F$ of characteristic $0$ such that, for some $a_i, b_j \in A$,
\[
[a_1, \dots , a_{2m'} ] [b_1, \dots , b_{2n'}] \notin T^{(2m' + 2n' -1)}
\]
(see  \cite[Theorem 1.4]{DK17} or \cite[Lemma 6]{GP15}) so
\begin{equation}
\label{2m2n}
\ell \, [a_1, \dots , a_{2m'} ] [b_1, \dots , b_{2n'}] \notin T^{(2m' + 2n' -1)}
\end{equation}
for all $\ell \in \mathbb Z$, $\ell \ne 0$.

The aim of the present note is to give a simple proof of the following theorem that improves the result (\ref{6[m][n]}) by Sharma and Srivastava.

\begin{theorem}
\label{theorem3[aa][bb]}
Let $A$ be a unital associative ring. Let $m, n \in \mathbb Z$, $m, n >0$. Suppose that at least one of the integers $m,n$ is odd. Then, for all $a_i, b_j \in A$,
\[
3 \, [a_1, \dots , a_m] [b_1, \dots , b_n] \in T^{(m+n-1)} .
\]
\end{theorem}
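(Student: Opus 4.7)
Set $c = [a_1, \dots, a_m]$ and $d = [b_1, \dots, b_n]$. A preliminary observation is that $[c, d]$ expands by iterated use of the Jacobi identity into a $\mathbb Z$-linear combination of left-normed commutators of length $m+n$, so $[c, d] \in T^{(m+n)} \subseteq T^{(m+n-1)}$; in particular $cd \equiv dc \pmod{T^{(m+n-1)}}$. We may therefore swap the two factors freely and assume without loss of generality that $m$ is odd.

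My plan is to induct on $m+n$. The base cases are immediate: when $m = 1$ we have $cd \in T^{(n)} = T^{(m+n-1)}$, the case $n = 1$ follows by the swap, and $m = 3$ is exactly the Levin--Sehgal inequality $3[a_1,a_2,a_3][b_1,\dots,b_n] \in T^{(n+2)}$ recalled in the introduction. For the inductive step, set $e = [b_1, \dots, b_{n-1}]$ so that $d = [e, b_n]$, and apply the Leibniz rule $[xy, z] = x[y,z] + [x,z]y$ with $x = c$, $y = e$, $z = b_n$ to obtain
\[
cd \;=\; [ce, b_n] \;-\; [c, b_n]\, e.
\]
After multiplying by~$3$, it suffices to show that each of $3[ce, b_n]$ and $3[c, b_n]\, e$ lies in $T^{(m+n-1)}$. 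The term $[c, b_n]\, e$ is a product of commutators of lengths $m+1$ and $n-1$; a second application of the reduction (or an appeal to the inductive hypothesis with the appropriate parity) should handle it. The term $[ce, b_n]$ is controlled by combining the inductive hypothesis $3\, ce \in T^{(m+n-2)}$ (applied to the smaller pair $(m, n-1)$) with a further Leibniz expansion: writing $3\,ce = \sum p_i\, C_i\, q_i$ with $C_i$ a left-normed commutator of length $m+n-2$, one has $[p_i C_i q_i, b_n] = p_i[C_i, b_n]q_i + p_i C_i[q_i, b_n] + [p_i, b_n]C_i q_i$, whose first summand is manifestly in $T^{(m+n-1)}$ and whose remaining summands are products of shorter commutators that I hope to handle by further applications of the induction.

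The main obstacle I anticipate is the ``both odd'' case. When $m$ and $n$ are both odd, the pair $(m+1, n-1)$ arising in the first reduction step has two even lengths, for which the counterexample~\eqref{2m2n} forbids any analogue of the theorem; worse, the total $m+n$ is unchanged, so this pair is not even smaller with respect to the induction. I expect the resolution will require combining the reduction above with its mirror image (obtained from the swap $cd \equiv dc$ and migrating an $a$-entry into~$d$ instead of a $b$-entry into~$c$), chosen in such a way that the two bad even-even intermediates telescope against each other, leaving only contributions that can be absorbed into Levin--Sehgal-controlled expressions.
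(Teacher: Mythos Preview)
Your reduction has a structural problem of which the ``both odd'' worry is only a symptom. After writing $cd = [ce, b_n] - [c, b_n]\,e$, neither piece is smaller for an induction on $m+n$: the pair of lengths in $[c, b_n]\,e$ is $(m+1, n-1)$ with the same total, and in the Leibniz expansion of $3[ce, b_n] = [3ce, b_n]$ the residual terms $p_iC_i[q_i, b_n]$ and $[p_i, b_n]C_iq_i$ again involve products of commutators of lengths $m+n-2$ and $2$, summing to $m+n$, with the factor $3$ already consumed. So the recursion never closes. The underlying obstruction is exactly the one flagged in the Remark after Lemma~\ref{lemma3[TnAA]}: peeling off a single entry amounts to asking for $3\,[T^{(k)}, A] \subseteq T^{(k+1)}$, and by (\ref{2m2n}) this fails for every even $k$ and every nonzero integer coefficient.

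The paper sidesteps this by peeling off \emph{two} entries at a time. The key lemma (Lemma~\ref{lemma3[TnAA]}) is $3\,[T^{(k)}, A, A] \subseteq T^{(k+2)}$ for every $k$; this is where the Levin--Sehgal inequality is actually spent, since the proof of the lemma invokes (\ref{[3][n]}) once. One then inducts on the odd integer $m$, stepping down by $2$: with $c_1 = [a_1,\dots,a_{m-2}]$, $c_2 = [b_1,\dots,b_n]$, $v = a_{m-1}$, $w = a_m$, the identity (\ref{[aabb]}) gives
\[
[c_1, v, w]\,c_2 = [c_1 c_2, v, w] - \bigl([c_1, v][c_2, w] + [c_1, w][c_2, v]\bigr) - c_1[c_2, v, w].
\]
The inductive hypothesis (for $m-2$, still odd) yields $3\,c_1c_2 \in T^{(m+n-3)}$ and $3\,c_1[c_2,v,w] \in T^{(m+n-1)}$; the lemma upgrades the first to $3\,[c_1c_2, v, w] \in T^{(m+n-1)}$; and the symmetric middle term lies in $T^{(m+n-1)}$ outright, without any factor of $3$, by the same computation as (\ref{[cv][uw]}). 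The two-step descent is what keeps the parity aligned and the induction well-founded; a one-step descent cannot be repaired by telescoping even--even intermediates.
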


Note that, for some $A$ and some $a_i, b_j \in A$,
\[
[a_1, a_2, a_3] [b_1, b_2] \notin T^{(4)}
\]
(see \cite[Theorem 1.1]{Kr13}) so for $m = 3$, $n=2$ Theorem \ref{theorem3[aa][bb]} cannot be improved. On the other hand, if $m = n = 3$ then, for each associative ring $A$ and all $a_i, b_j \in A$,
\[
[a_1, a_2, a_3] [b_1, b_2, b_3] \in T^{(5)}
\]
(see \cite[Lemma 2.1]{CostaKras18}). There is some evidence that suggests that the case $m = n = 3$ is exceptional and in all other cases Theorem \ref{theorem3[aa][bb]} cannot be improved further.

\begin{conjecture}
\label{conjecture[aa][bb]}
Let $m, n \in \mathbb Z$, $m, n >1$ and either $m$ or $n$ is odd. If $(m, n) \ne (3, 3)$ then there is a unital  associative ring $A$ and $a_i, b_j \in A$ such that
\[
[a_1, \dots ,a_m][b_1,  \dots , b_n] \notin T^{(m+n-1)}
\]
\end{conjecture}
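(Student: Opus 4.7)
Since Theorem \ref{theorem3[aa][bb]} implies that $3 \, [a_1,\dots,a_m][b_1,\dots,b_n] \in T^{(m+n-1)}$ holds universally, any witness $A$ for the conjecture must carry non-trivial $3$-torsion in the quotient $A/T^{(m+n-1)}(A)$; hence the plan is to construct $A$ as a unital associative $\mathbb{F}_3$-algebra. Equivalently, the task is to show that the multilinear polynomial
\[
p_{m,n} := [x_1, \dots, x_m][y_1, \dots, y_n]
\]
is not in the $T$-ideal $T^{(m+n-1)}$ of the free $\mathbb{F}_3$-algebra generated by $\{x_i, y_j\}$. Since $p_{m,n}$ is multilinear of degree $m+n$, the question takes place inside the finite-dimensional multilinear component of degree $m+n$ of the relatively free algebra $\mathbb{F}_3\langle X\rangle / T^{(m+n-1)}$, which is an $\mathbb{F}_3 S_{m+n}$-module.

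First I would collect the known base cases: the Lie nilpotent algebras constructed in \cite{Kr13} realise the conjecture for $(m,n) = (3,2)$ and, by symmetry, $(2,3)$. The strategy is then to reach all other pairs by a ``stretching'' construction that, given a witness $A$ for $(m,n)$, produces a witness for $(m, n+2)$ (and symmetrically for $(m+2, n)$). Concretely, one would form an extension $A'$ of $A$ by a small nilpotent piece carrying two extra generators $b', b''$ chosen so that $[a_1,\dots,a_m][b_1,\dots,b_n, b', b'']$ projects onto the known non-zero residue of $p_{m,n}$ modulo $T^{(m+n-1)}(A)$; a naive tensor product fails because new generators would commute with old ones, so the extension must be a genuine split extension or an Ore-type construction modelled on the even-even algebras of \cite{DK17}. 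Iterating from the two base cases would cover the parity classes \textbf{(odd, even)} and \textbf{(even, odd)}. The \textbf{(odd, odd)} class requires a separate starting pair such as $(5,3)$, which must be built directly, since the exceptional identity of \cite{CostaKras18} forbids $(3,3)$ as a base.

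The main obstacles are twofold. First, the stretching step is delicate because in the extension $A'$ one must rule out new $(m+n+1)$-fold commutator identities that could collapse $p_{m, n+2}$; this will likely require either an explicit basis description of $A'/T^{(m+n+1)}(A')$ or a representation-theoretic argument using the action of $S_{m+n+2}$ on the multilinear component, where one exhibits a single non-vanishing matrix coefficient in an irreducible $\mathbb{F}_3 S_{m+n+2}$-constituent. Second, the odd-odd class needs verification that no analog of the \cite{CostaKras18} identity holds at $(5,3)$, $(5,5)$, or $(7,3)$; the natural method is to show that in the free Lie nilpotent $\mathbb{F}_3$-algebra of class exactly $m+n-2$, suitably truncated, the image of $p_{m,n}$ is already non-zero for these pairs, which can be checked by a direct (if laborious) multilinear computation. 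The whole proof would thus combine one inductive construction, the two known base cases, and a finite explicit verification at each exceptional odd-odd starting point.
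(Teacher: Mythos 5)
You should note first that the statement you were asked to prove is stated in the paper as Conjecture \ref{conjecture[aa][bb]}, not as a theorem: the authors do not prove it, they only record the known base case $(m,n)=(3,2)$ from \cite{Kr13}, the exceptional identity at $(3,3)$ from \cite{CostaKras18}, and the reduction to the free ring $\mathbb Z\langle X, Y\rangle$. There is therefore no proof in the paper to compare yours against, and the problem is, as far as the paper records, open. Your text is consistent with the evidence the paper cites, but it is a research programme rather than a proof: the one genuinely load-bearing step, the ``stretching'' construction producing a witness for $(m,n+2)$ from a witness for $(m,n)$, is never defined. You say only that a tensor product fails and that one needs ``a genuine split extension or an Ore-type construction,'' and you then list as ``obstacles'' exactly the points where the mathematics would have to happen --- ruling out new relations in $T^{(m+n+1)}(A')$ that could kill $p_{m,n+2}$, and verifying non-degenerate starting points for the odd--odd class. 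Until those are carried out, nothing beyond the already-known case $(3,2)$ (and its mirror) is established.

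Two smaller points. Your reduction to $\mathbb F_3$ is phrased as an equivalence, but it is only an implication in the useful direction: since $T^{(k)}(\mathbb Z\langle X\rangle)$ surjects onto $T^{(k)}(\mathbb F_3\langle X\rangle)$ under reduction mod $3$, non-membership over $\mathbb F_3$ implies non-membership over $\mathbb Z$, which is what you need; the converse would require knowing that the relevant torsion is exactly $3$-torsion, which the paper lists as open for $k>5$ (see Remark 1 of the paper). Second, your inductive scheme starting from $(3,2)$ and $(5,3)$ would in any case miss the pairs with $m=2$ or $n=2$ other than $(3,2)$ and, more importantly, gives no mechanism for increasing the \emph{odd} index by $2$ while keeping the even one fixed, so even granting the stretching step you would need to check that the two moves $(m,n)\mapsto(m+2,n)$ and $(m,n)\mapsto(m,n+2)$ together reach every admissible pair from your chosen bases; as written they do, but only if both moves are available, and you have only sketched one of them.
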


It is easy to check that to prove (or disprove) Conjecture \ref{conjecture[aa][bb]} one can assume that $A = \mathbb Z \langle X, Y \rangle$ is the free unital associative ring on a free generating set $X \cup Y$ where $X = \{ x_1, x_2, \dots \}$, $Y = \{ y_1, y_2, \dots \}$ and $a_i = x_i$, $b_j = y_j$ for all $i, j$.

\medskip
The following assertion follows immediately from Theorem \ref{theorem3[aa][bb]} (see \cite[Prop. 1.3 and 1.4]{DK17-2} for more details).

\begin{corollary}
Let $A$ be a unital associative ring and let $n_1, \dots , n_k \in \mathbb Z$, $n_i > 0$ for all $i$. Suppose that $\ell$ of the integers $n_1, \dots , n_k$ are odd. Let
\[
q = \left\{
\begin{array}{ll}
\ell & \mbox{ if $\ell < k$, that is, if at least one $n_i$ is even;}
\\
\ell - 1 & \mbox{ if $\ell = k$, that is, if all $n_i$ are odd }
\end{array}
\right .
\]
and let $N_{k,q} = n_1 + \dots + n_k - 2(k-1) + q$.  Then, for all $a_{ij} \in A$,
\[
3^q \, [a_{11}, \dots , a_{1n_1}] \dots [a_{k1}, \dots , a_{kn_k}] \in T^{(N_{k,q})} .
\]
\end{corollary}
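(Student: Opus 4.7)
The plan is to proceed by induction on $k$, merging the last factor $c_k = [a_{k1}, \dots, a_{kn_k}]$ into the already-processed product $c_1 \cdots c_{k-1}$ at each step. The base case $k = 1$ is immediate: for either parity of $n_1$ one has $q = 0$ and $N_{1,0} = n_1$, and $[a_{11}, \dots, a_{1n_1}] \in T^{(n_1)}$ by definition.

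The workhorses are two inclusions between products of the ideals $T^{(r)}$. Using Latyshev's result (\ref{[m][n](mn2)}) together with the standard identity $[[x_1, \dots, x_r], z] = [x_1, \dots, x_r, z] \in T^{(r+1)}$, a short manipulation (reducing $u [x_1,\dots,x_r] v \cdot u' [y_1,\dots,y_s] v'$ by commuting the middle factor $vu'$ past $[x_1,\dots,x_r]$) shows $T^{(r)} T^{(s)} \subseteq T^{(r+s-2)}$ for all $r, s$. Performing the same manipulation but applying Theorem \ref{theorem3[aa][bb]} rather than Latyshev in the key step yields $3\, T^{(r)} T^{(s)} \subseteq T^{(r+s-1)}$ whenever at least one of $r, s$ is odd.

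For the inductive step, let $\ell'$, $q'$, and $N' = N_{k-1, q'}$ be the quantities for $c_1, \dots, c_{k-1}$, so by hypothesis $3^{q'} c_1 \cdots c_{k-1} \in T^{(N')}$. A straightforward case check on the parity of $n_k$ and whether $\ell' = k-1$ shows that $q - q' \in \{0, 1\}$. From $N' = \sum_{i<k} n_i - 2(k-2) + q'$ one computes $N' \equiv \ell' + q' \pmod 2$, so $N'$ is even when $\ell' < k-1$ (where $q' = \ell'$) and odd when $\ell' = k-1$ (where $q' = k-2$). The crucial point is that in every case where $q = q' + 1$ at least one of $N', n_k$ is odd: if $n_k$ is odd this is immediate, while if $n_k$ is even the equality $q = q' + 1$ can only happen when $\ell' = k-1$, forcing $N'$ to be odd.

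With the parity bookkeeping settled, the induction closes at once. When $q = q'$, apply the first inclusion to get $T^{(N')} \cdot T^{(n_k)} \subseteq T^{(N' + n_k - 2)} = T^{(N_{k,q})}$. When $q = q' + 1$, apply the second inclusion (legal by the parity check) to get $3 \cdot T^{(N')} \cdot T^{(n_k)} \subseteq T^{(N' + n_k - 1)} = T^{(N_{k,q})}$, which supplies the one extra factor of $3$ needed to pass from $3^{q'}$ to $3^q$. The main obstacle is precisely the parity accounting above: one must verify that the exceptional clause in the definition of $q$ (all $n_i$ odd, forcing $q = \ell - 1$) lines up with the parity of the accumulating length $N'$, so that Theorem \ref{theorem3[aa][bb]} is applicable at every step where a factor of $3$ is spent, and no factor of $3$ is wasted on a step where Latyshev alone suffices.
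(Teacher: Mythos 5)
Your argument is correct and is essentially the derivation the paper has in mind: the paper states the corollary "follows immediately from Theorem \ref{theorem3[aa][bb]}" and delegates the details to \cite[Prop.\ 1.3 and 1.4]{DK17-2}, which are precisely your two product inclusions $T^{(r)}T^{(s)}\subseteq T^{(r+s-2)}$ and $3\,T^{(r)}T^{(s)}\subseteq T^{(r+s-1)}$ (one of $r,s$ odd), applied inductively with the same parity bookkeeping on $N'$. Your verification that every step where $q$ increments meets the odd-length hypothesis is the one nontrivial point, and you have it right.
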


Note that, in general,
\[
r \, [a_{11}, \dots , a_{1n_1}] \dots [a_{k1}, \dots , a_{kn_k}] \notin T^{(N_{k,q} + 1)}
\]
for some unital associative ring $A$, some $a_{ij} \in A$ and all $r \in \mathbb Z$, $r \ne 0$. This was proved by Dangovski \cite[Prop. 2.2]{Dangovski15} if $\ell = k$ and by the authors of the present article \cite[Theorem 1.7]{DK17-2} if $\ell < k$.

\medskip
\noindent
\textbf{Remarks.} 1. Theorem \ref{theorem3[aa][bb]} and Conjecture \ref{conjecture[aa][bb]} are closely connected to the description of the additive group of the ring $\mathbb Z \langle X \rangle / T^{(k)}$ where $\mathbb Z \langle X \rangle$ is the free unital associative ring with a free generating set $X = \{ x_1, x_2, \dots \}$.

It is clear that the additive group of the ring $\mathbb Z\langle X \rangle / T^{(2)}$ is free abelian. It was shown in \cite{BEJKL12} that the additive group of $\mathbb Z \langle X \rangle / T^{(3)}$ is also free abelian. On the other hand, the additive group of the ring $\mathbb Z \langle X \rangle / T^{(4)}$ is a direct sum $G \oplus H$ of a free abelian group $G$ and an elementary abelian $3$-group $H$ (see \cite{DK15,Kr13}). Computational data by Cordwell, Fei and Zhou presented in \cite[Appendix A]{CFZ15} suggest that for $k \ge 6$ the additive group of the ring $\mathbb Z \langle X \rangle / T^{(k)}$  is also a direct sum of a free abelian group $G$ and a non-trivial elementary abelian $3$-group $H$ while for $k = 5$ this group is free abelian. However, it is still an open problem whether the torsion subgroup $H$ of the additive group of  $\mathbb Z \langle X \rangle / T^{(k)}$ is indeed a non-trivial (elementary) abelian $3$-group if $k >5$  and $H = \{ 0 \}$ if $k = 5$.

If Conjecture \ref{conjecture[aa][bb]} holds then the elements $[a_1, \dots , a_m][b_1, \dots , b_n] + T^{(m+n-1)}$ are non-trivial elements of $H \subset \mathbb Z \langle X \rangle / T^{(m+n-1)}$ whose order, by Theorem \ref{theorem3[aa][bb]}, is equal to $3$ (if $( m, n) \ne (3, 3)$). If $m = 3$, $n = 2$ then such products of commutators generate $H$ as a two-sided ideal in $\mathbb Z \langle X \rangle / T^{(4)}$ (see \cite{DK15,Kr13}). One might expect a similar situation if $k>5$.

2. The proof of (\ref{6[m][n]}) given in  \cite{BJ10} can be modified to prove Theorem \ref{theorem3[aa][bb]} (see \cite[Remark 3.9]{AE15} for explanation). This modification uses computer calculations in a free associative ring. Our proof of Theorem \ref{theorem3[aa][bb]} does not require computer calculations and is much simpler then that modification of the proof given in  \cite{BJ10}.

3. Theorem \ref{theorem3[aa][bb]} and its corollary remain valid for a non-unital associative ring $A$; one can easily deduce this from the corresponding results for unital rings. We state and prove our results for a unital associative ring $A$ in order to simplify notation in the proof.  

\section{Proof of Theorem \ref{theorem3[aa][bb]} }

It is straightforward to check that
\begin{equation*}
\label{[abc]}
[a_1 a_2, b] = a_1[a_2, b] + [a_1, b] a_2,
\end{equation*}
so the map $D_b: A \rightarrow A$ such that $D_b (a) = [a, b]$ $(a \in A)$ is a derivation of the ring $A$. It follows that
\begin{equation}
\label{[aabb]}
[a_1 a_2, b_1, b_2]  = a_1 [ a_2, b_1, b_2] + [a_1, b_1] [a_2, b_2] + [a_1, b_2] [a_2, b_1] + [a_1, b_1, b_2] a_2
\end{equation}
\begin{multline}
\label{[aabbb]}
[a_1 a_2, b_1, b_2, b_3] = a_1 [a_2, b_1, b_2, b_3] + [a_1, b_1] [a_2, b_2, b_3] +[a_1, b_2] [a_2, b_1, b_3]
\\
+ [a_1, b_3] [a_2, b_1, b_2] + [a_1, b_1, b_2] [a_2, b_3] + [a_1, b_1, b_3] [a_2, b_2]
\\
+ [a_1, b_2, b_3] [a_2, b_1] + [a_1, b_1, b_2, b_3] a_2
\end{multline}
for all $a_i,b_j \in A$.

The following lemma is a modification of well-known results (see, for instance, \cite[Lemma 2.1]{Kr13}, \cite[Lemma 2 (3)]{Latyshev65}, \cite[Lemma 8.2]{MAP17}). We prove it here in order to have the article self-contained.

\begin{lemma}
\label{k3Tk+2}
Let $A$ be a unital associative ring. Then, for all $k>1$ and all $g_i, f_j \in A$, we have

\begin{equation}
\label{32+32T4-1}
[g_{1}, \dots , g_{k - 1},f_{1}][f_{2}, f_{3}, f_4] +
[g_{1}, \dots , g_{k - 1},f_{2}][f_{1}, f_{3}, f_4] \in T^{(k + 2)},
\end{equation}
\begin{equation}
\label{32+32T4-2}
[g_{1}, \dots , g_{k - 1},f_{1}][f_{2}, f_{3}, f_4] +
[g_{1}, \dots , g_{k - 1},f_{4}][f_{2}, f_{3}, f_1] \in T^{(k + 2)}.
\end{equation}

\end{lemma}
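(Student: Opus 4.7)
The plan is to prove (\ref{32+32T4-1}) directly and then derive (\ref{32+32T4-2}) from it by a Jacobi manipulation. Writing $h = [g_1, \dots, g_{k-1}]$, I would start from the observation that $[h, f_1 f_2, f_3, f_4] \in T^{(k+2)}$, since this is a commutator of length $k+2$ in $A$ (with $f_1 f_2$ treated as a single element). Expanding via the Leibniz rule $[h, f_1 f_2] = [h, f_1] f_2 + f_1 [h, f_2]$ and then applying (\ref{[aabb]}) twice produces an explicit sum of eight products. Two of them, $[h, f_1, f_3, f_4] f_2$ and $f_1 [h, f_2, f_3, f_4]$, already lie in $T^{(k+2)}$ because $[h, \ast, \ast, \ast]$ is a length-$(k+2)$ commutator. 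Using the general fact that two commutators of lengths $m$ and $\ell$ commute modulo $T^{(m+\ell)}$ (which follows from iterated Jacobi expansion of their Lie commutator), the three ``mirror'' terms $[f_1, f_3][h, f_2, f_4]$, $[f_1, f_4][h, f_2, f_3]$, $[f_1, f_3, f_4][h, f_2]$ become, modulo $T^{(k+2)}$, $B_1 = [h, f_2, f_4][f_1, f_3]$, $B_2 = [h, f_2, f_3][f_1, f_4]$, and $[h, f_2][f_1, f_3, f_4]$. The total expansion then reads
\[
[h, f_1][f_2, f_3, f_4] + [h, f_2][f_1, f_3, f_4] + A_1 + A_2 + B_1 + B_2 \equiv 0 \pmod{T^{(k+2)}}
\]
with $A_1 = [h, f_1, f_3][f_2, f_4]$ and $A_2 = [h, f_1, f_4][f_2, f_3]$.

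The crux of the proof is the auxiliary identity
\[
[h', u][v_1, v_2] \equiv -[h', v_1][u, v_2] \pmod{T^{(k+2)}} \qquad (\ast)
\]
valid for any length-$k$ commutator $h'$ and any $u, v_1, v_2 \in A$. It follows by expanding the length-$(k+2)$ commutator $[h', u v_1, v_2]$ via Leibniz and (\ref{[aabb]}); the two ``diagonal'' terms $[h', u, v_2] v_1$ and $u [h', v_1, v_2]$ lie in $T^{(k+2)}$, leaving $[h', u][v_1, v_2] + [u, v_2][h', v_1] \equiv 0 \pmod{T^{(k+2)}}$, and swapping the last product (which is valid modulo $T^{(k+3)}$) yields $(\ast)$. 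Applying $(\ast)$ with $h' = [h, f_1]$ to $A_1$ and $A_2$ shows each is congruent to $\mp [h, f_1, f_2][f_3, f_4] \pmod{T^{(k+2)}}$ with opposite signs, so $A_1 + A_2 \in T^{(k+2)}$; the analogous application with $h' = [h, f_2]$ gives $B_1 + B_2 \in T^{(k+2)}$. Combined with the previous congruence, this yields (\ref{32+32T4-1}).

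For (\ref{32+32T4-2}), I would use Jacobi to replace $[f_2, f_3, f_4]$ by $-[f_3, f_4, f_2] - [f_4, f_2, f_3]$ and $[f_2, f_3, f_1]$ by $-[f_3, f_1, f_2] - [f_1, f_2, f_3]$. This reduces the claim to showing that $[h, f_1][f_4, f_2, f_3] + [h, f_4][f_1, f_2, f_3]$ (which is (\ref{32+32T4-1}) with the variables relabelled) and $[h, f_1][f_3, f_4, f_2] + [h, f_4][f_3, f_1, f_2]$ each lie in $T^{(k+2)}$. For the second, applying (\ref{32+32T4-1}) to each summand rewrites it as $-[h, f_3]([f_1, f_4, f_2] + [f_4, f_1, f_2]) \pmod{T^{(k+2)}}$, which vanishes because $[f_1, f_4] + [f_4, f_1] = 0$. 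The main technical hurdle will be the careful sign bookkeeping through the several Jacobi and commuting manipulations.
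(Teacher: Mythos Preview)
Your proof is correct and follows essentially the same route as the paper: both establish the auxiliary identity $(\ast)$ (which is the paper's displayed relation (\ref{[ch][hh]})) by expanding a length-$(k+2)$ commutator with a product entry, then expand $[h, f_1 f_2, f_3, f_4]$ into eight terms and use $(\ast)$ to kill the four cross terms in pairs, obtaining (\ref{32+32T4-1}), and finally derive (\ref{32+32T4-2}) from (\ref{32+32T4-1}) via the Jacobi identity. The only cosmetic differences are that the paper presents the auxiliary identity first and uses the three-step Leibniz formula (\ref{[aabbb]}) directly rather than Leibniz followed by two applications of (\ref{[aabb]}), and its Jacobi rewriting $[f_2,f_3,f_4] = [f_4,f_3,f_2] - [f_4,f_2,f_3]$ splits the target immediately into two instances of (\ref{32+32T4-1}), making the final step slightly shorter than yours.
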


\begin{proof}
Let $c_1 = [g_1, \dots , g_{k - 1}, g_k]$. It is clear that $ [c_1, (h_1h_2), h_3] \in T^{(k + 2)}$ for all $g_i, h_j \in A$. We have $[c_1, (h_1 h_2), h_3] = - [h_1 h_2, c_1, h_3]$ and, by (\ref{[aabb]}),
\[
[h_1 h_2, c_1, h_3] = h_1[h_2, c_1, h_3] + [h_1, c_1][h_2, h_3] + [h_1, h_3][h_2, c_1] + [h_1, c_1, h_3] h_2 \in T^{(k + 2)}.
\]
It is clear that $h_1 [h_2, c_1, h_3] = - h_1 [ c_1, h_2, h_3]  \in T^{(k + 2)}$; similarly, $ [h_1, c_1, h_3] h_2 \in T^{(k + 2)}$  so
\[
[h_1, c_1][h_2, h_3] + [h_1, h_3][h_2, c_1] \in T^{(k + 2)}.
\]
It follows that
\[
[c_1, h_{1}][h_{2},h_{3}] + [c_{1},h_{2}][h_{1},h_{3}] - \bigl[ [c_{1},h_{2}], [h_{1},h_{3}] \bigr]
= - \bigl( [h_1, c_1][h_2, h_3] + [h_1, h_3][h_2, c_1] \bigr) \in T^{(k + 2)} .
\]
Since
\[
\bigl[ [c_1, h_2], [h_1, h_3] \bigr] = [c_1, h_2, h_1, h_3] - [c_1, h_2, h_3, h_1]  \in T^{(k +3)} \subseteq T^{(k + 2)},
\]
we have
\begin{equation}
\label{[ch][hh]}
[c_1, h_{1}][h_{2},h_{3}] + [c_{1},h_{2}][h_{1},h_{3}] \in T^{(k + 2)}.
\end{equation}

Now we check that (\ref{32+32T4-2}) holds. Let $c_2 = [g_1, \dots , g_{k - 1}]$. It is clear that $[c_2, (f_1 f_2), f_3, f_4] \in T^{(k + 2)}$ for all $g_i, f_j \in A$. We have $[c_2, (f_1 f_2), f_3, f_4] = - [f_1 f_2, c_2, f_3, f_4]$ and, by (\ref{[aabbb]}),
\begin{multline*}
[f_1 f_2, c_2, f_3, f_4] = f_1 [f_2, c_2, f_3, f_4] + [f_1, c_2] [f_2, f_3, f_4] + [f_1, f_3] [f_2, c_2, f_4]
\\
+ [f_1, f_4] [f_2, c_2, f_3] + [f_1, c_2, f_3][f_2, f_4] + [f_1, c_2, f_4] [f_2, f_3]
\\
+ [f_1, f_3, f_4] [f_2, c_2] + [f_1, c_2, f_3, f_4] f_4 \in T^{(k + 2)}.
\end{multline*}
It is clear that $f_1 [f_2, c_2, f_3, f_4] = - f_1 [c_2, f_2, f_3, f_4] \in T^{(k + 2)}$ and similarly $[f_1, c_2, f_3, f_4] f_4 \in T^{(k +2)}$. Further, by (\ref{[ch][hh]}),
\begin{multline*}
[f_1, f_3] [f_2, c_2, f_4] + [f_1, f_4] [f_2, c_2, f_3] = \bigl( [c_2, f_2, f_4][f_3, f_1] + [c_2, f_2, f_3] [f_4, f_1] \bigr)
\\
- \bigl[ [c_2, f_2, f_4], [f_3, f_1] \bigr] - \bigl[ [c_2, f_2, f_3], [f_4, f_1] \bigr] \in T^{(k +2)}
\end{multline*}
and similarly
$[f_1, c_2, f_3][f_2, f_4] + [f_1, c_2, f_4] [f_2, f_3] = [c_2, f_1, f_3][f_4, f_2] + [c_2, f_1, f_4] [f_3, f_2] \in T^{(k + 2)}$. It follows that
\[
[f_1, c_2] [f_2, f_3, f_4] + [f_1, f_3, f_4] [f_2, c_2] \in T^{(k + 2)}
\]
so
\begin{multline*}
[c_2, f_1] [f_2, f_3, f_4] + [c_2, f_2] [f_1, f_3, f_4] - \bigl[ [c_2, f_2], [f_1, f_3, f_4] \bigr]
\\
= - \bigl( [f_1, c_2] [f_2, f_3, f_4] + [f_1, f_3, f_4] [f_2, c_2] \bigr)  \in T^{(k + 2)}.
\end{multline*}
Since $\bigl[ [c_2, f_2], [f_1, f_3, f_4] \bigr] \in T^{(k + 3)} \subseteq T^{(k + 2)}$, we have
\[
[c_2, f_1] [f_2, f_3, f_4] + [c_2, f_2] [f_1, f_3, f_4] \in T^{(k + 2)}
\]
so (\ref{32+32T4-2}) holds.

It remains to check that (\ref{32+32T4-1}) holds. Recall that $c_2 = [g_1, \dots , g_{k-1}]$. By the Jacobi identity, \[
[f_2, f_3, f_4] = [f_4, f_3, f_2] - [f_4, f_2, f_3], \qquad [f_2, f_3, f_1] = [f_1, f_3, f_2] - [f_1, f_2, f_3]
\]
so
\begin{multline*}
[c_2, f_1] [f_2, f_3, f_4] + [c_2, f_4] [f_2, f_3, f_1]
\\
= \bigl( [c_2, f_1] [f_4, f_3, f_2] + [c_2, f_4] [f_1, f_3, f_2] \bigr) - \bigl( [c_2, f_1] [f_4, f_2, f_3] + [c_2, f_4] [f_1, f_2, f_3] \bigr) .
\end{multline*}
By (\ref{32+32T4-2}), we have $[c_2, f_1] [f_2, f_3, f_4] + [c_2, f_4] [f_2, f_3, f_1]  \in T^{(k + 2)}$, that is, (\ref{32+32T4-1}) holds.

This completes the proof of Lemma \ref{k3Tk+2}.
\end{proof}

\begin{corollary}
\label{[k][3]sigma}
Let $A$ be a unital associative ring. Then, for all $k>1$ and all $g_i, f_j \in A$ and for each permutation $\sigma$ on the set $\{ 1, 2, 3, 4 \}$, we have
\begin{equation}
\label{sigma}
[g_1, \dots , g_{k-1}, f_{\sigma (1)}] [f_{\sigma (2)}, f_{\sigma (3)}, f_{\sigma (4)}]
\equiv (-1)^{\sigma} [g_1, \dots , g_{k-1}, f_{1}] [f_{2}, f_{3}, f_{4}] \pmod{T^{(k + 2)}} .
\end{equation}
\end{corollary}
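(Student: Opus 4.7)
The plan is to deduce the corollary from Lemma~\ref{k3Tk+2} by producing enough sign-changing transpositions to generate the symmetric group $S_4$. Read literally, (\ref{32+32T4-1}) and (\ref{32+32T4-2}) assert that applying the transpositions $(1,2)$ and $(1,4)$ to the index tuple $(1,2,3,4)$ of the $f$'s changes the sign of $[g_1,\dots,g_{k-1},f_1][f_2,f_3,f_4]$ modulo $T^{(k+2)}$. A third sign change is essentially free: since $[f_2,f_3] = -[f_3,f_2]$, we have $[f_2,f_3,f_4] = -[f_3,f_2,f_4]$ as an exact identity, so the transposition $(2,3)$ also acts with sign $-1$ (and even modulo $0$).

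Next I would check that $(1,2)$, $(2,3)$, $(1,4)$ generate $S_4$. Indeed, $(1,2)$ and $(2,3)$ already produce the symmetric group on $\{1,2,3\}$; in particular $(1,3) = (1,2)(2,3)(1,2)$, and conjugating $(1,4)$ by $(1,2)$ and by $(1,3)$ yields the remaining transpositions $(2,4) = (1,2)(1,4)(1,2)$ and $(3,4) = (1,3)(1,4)(1,3)$. Hence every $\sigma \in S_4$ can be written as a product $\tau_1 \cdots \tau_r$ of transpositions drawn from $\{(1,2),(2,3),(1,4)\}$, with $(-1)^r = (-1)^{\sigma}$. Applying these transpositions one at a time to the index tuple of the $f$'s multiplies $[g_1,\dots,g_{k-1},f_1][f_2,f_3,f_4]$ by $-1$ at each step modulo $T^{(k+2)}$, and the accumulated sign is exactly $(-1)^{\sigma}$, which gives (\ref{sigma}).

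I do not expect any real obstacle: both nontrivial sign-change identities are already established in Lemma~\ref{k3Tk+2}, and the group-theoretic observation is elementary. The only small point worth stating carefully is that each transposition acts simultaneously on all occurrences of the indices $1,2,3,4$, so that the last slot of the ``$g$-factor'' $[g_1,\dots,g_{k-1},-]$ is updated in sync with the three inner slots of the ``$f$-factor'', which is precisely the form in which the two identities of Lemma~\ref{k3Tk+2} are stated and which allows the congruences to compose cleanly.
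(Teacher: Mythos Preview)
Your argument is correct and follows essentially the same route as the paper: verify the congruence for the three transpositions $(1,2)$, $(1,4)$ (from Lemma~\ref{k3Tk+2}) and $(2,3)$ (from $[f_2,f_3]=-[f_3,f_2]$), then use that these generate $S_4$. The only difference is cosmetic: you spell out the generation of $S_4$ by exhibiting the remaining transpositions, whereas the paper simply asserts it.
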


\begin{proof}
It is clear that (\ref{sigma}) is true if $\sigma = (23)$ is the transposition that permutes $2$ and $3$. By Lemma \ref{k3Tk+2}, (\ref{sigma}) holds if $\sigma = (12)$ and $\sigma = (14)$. Hence, (\ref{sigma}) is true for all permutations $\sigma$ that are products of the transpositions $(12), (23)$ and $(14)$. However, it is easy to check that these $3$ transpositions generate the group $S_4$ of all permutations on the set $\{ 1, 2, 3, 4 \}$. The result follows.
\end{proof}

The following corollary has been proved by Levin and Sehgal \cite[Lemma 2(a)]{LS85}. Our proof is different from one given in \cite{LS85}; it shows that the coefficient $3$ in (\ref{[3][n]}) appears because of the Jacobi identity.
\begin{corollary}[see \cite{LS85}]
\label{3[k][3]}
Let $A$ be a unital associative ring. Then, for all $a_i, b_j \in A$,
\begin{equation}
\label{[3][n]}
3 \, [a_1, \dots , a_k]  [b_1, b_2, b_3] \in T^{(k+2)}.
\end{equation}
\end{corollary}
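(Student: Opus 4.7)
The plan is to apply Corollary \ref{[k][3]sigma} with $g_i = a_i$ for $i = 1, \dots, k-1$ and with $f_1 = a_k$, $f_2 = b_1$, $f_3 = b_2$, $f_4 = b_3$, restricting attention to those permutations $\sigma \in S_4$ that fix $1$, i.e., to permutations of the three ``$b$'' indices. Such $\sigma$ leaves the first factor $[a_1, \dots, a_k]$ unchanged and only permutes the entries of the second commutator $[b_1, b_2, b_3]$. Setting $c = [a_1, \dots, a_k]$, this gives, for every such $\sigma$,
\[
c \, [b_{\sigma(2)-1}, b_{\sigma(3)-1}, b_{\sigma(4)-1}] \equiv (-1)^{\sigma} \, c \, [b_1, b_2, b_3] \pmod{T^{(k+2)}}.
\]

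Now I would specialise to the two $3$-cycles $\sigma = (2\,3\,4)$ and $\sigma = (2\,4\,3)$, both of which are even permutations. These send the second commutator to $[b_2, b_3, b_1]$ and $[b_3, b_1, b_2]$ respectively, yielding
\[
c \, [b_2, b_3, b_1] \equiv c \, [b_1, b_2, b_3] \pmod{T^{(k+2)}}, \qquad c \, [b_3, b_1, b_2] \equiv c \, [b_1, b_2, b_3] \pmod{T^{(k+2)}}.
\]

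Finally I would invoke the Jacobi identity $[b_1, b_2, b_3] + [b_2, b_3, b_1] + [b_3, b_1, b_2] = 0$; multiplying by $c$ on the left and substituting the two congruences above collapses the sum into $3 \, c \, [b_1, b_2, b_3]$, so
\[
3 \, [a_1, \dots, a_k][b_1, b_2, b_3] \equiv 0 \pmod{T^{(k+2)}},
\]
which is precisely the claim. There is really no obstacle beyond bookkeeping: the main content has already been done in Lemma \ref{k3Tk+2} and Corollary \ref{[k][3]sigma}, where invariance (up to sign) of such products under arbitrary $S_4$-permutations of the last four arguments was established; the role of the Jacobi identity here merely explains, as the remark before the corollary indicates, why the coefficient $3$ (rather than $1$) naturally appears.
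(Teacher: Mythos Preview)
Your proof is correct and follows essentially the same approach as the paper: both arguments combine the Jacobi identity $[b_1,b_2,b_3]+[b_2,b_3,b_1]+[b_3,b_1,b_2]=0$ with Corollary~\ref{[k][3]sigma} (applied to even permutations fixing the first slot) to conclude that the three summands are congruent modulo $T^{(k+2)}$, hence their sum is $3[a_1,\dots,a_k][b_1,b_2,b_3]$. Your write-up merely makes explicit the particular $3$-cycles $(2\,3\,4)$ and $(2\,4\,3)$ being used, which the paper leaves implicit.
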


\begin{proof}
By the Jacobi identity,
\[
[a_1, \dots , a_k] \bigl( [b_1, b_2, b_3] + [b_2, b_3, b_1] + [b_3, b_1, b_2]) = 0.
\]
On the other hand, by Corollary \ref{[k][3]sigma},
\[
[a_1, \dots , a_k] [b, b_2, b_3]  \equiv [a_1, \dots , a_k] [b_2, b_3, b_1] \pmod{T^{(k+2)}}
\\
\equiv [a_1, \dots , a_k] [b_3, b_1, b_2]) \pmod{T^{(k+2)}} .
\]
It follows that
\[
3 \, [a_1, \dots , a_k]  [b_1, b_2, b_3] \equiv 0 \pmod{ T^{(k+2)} },
\]
as required.
\end{proof}

The following lemma is a modification of \cite[Lemma 2]{GP15}. Note that the proof given in \cite{GP15} allows to prove only the inclusion $6 \, [ T^{(k)}, A, A ] \subseteq T^{(k+2)}$.

\begin{lemma}[cf. \cite{GP15}]
\label{lemma3[TnAA]}
Let $A$ be a unital associative ring. Then, for each $k \ge 1$, we have
\[
3 \, [ T^{(k)}, A, A ] \subseteq T^{(k+2)}.
\]
\end{lemma}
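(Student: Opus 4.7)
The plan is to show $3[ucv, x, y] \in T^{(k+2)}$ for every commutator $c = [a_1, \dots, a_k]$ and every $u, v, x, y \in A$; this suffices because $T^{(k)}$ is the additive span of elements of the form $ucv$. I would expand $[ucv, x, y]$ by applying the derivations $[-, x]$ and then $[-, y]$ to the triple product $u \cdot c \cdot v$, obtaining nine terms indexed by how the two derivations distribute among the three factors: three ``diagonal'' terms $[u, x, y]cv$, $u[c, x, y]v$, $uc[v, x, y]$, and six ``mixed'' terms.

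I would then handle the nine terms in three groups. First, the three diagonal terms: $u[c, x, y]v \in T^{(k+2)}$ because $[c, x, y]$ is a $(k+2)$-fold commutator; and $3[u, x, y]cv$, $3\,uc[v, x, y]$ lie in $T^{(k+2)}$ by Corollary \ref{3[k][3]}, using that $c$ commutes with any triple commutator modulo $T^{(k+2)}$ (since $[c, [u, x, y]]$ expands via Jacobi into left-normed commutators of length $k+3$). Second, four of the six mixed terms pair up as $[u, x][c, y]v + [u, y][c, x]v$ and $u[c, x][v, y] + u[c, y][v, x]$. Applying identity (\ref{[ch][hh]}) from Lemma \ref{k3Tk+2} twice with $c_1 = c$, each pair collapses (after using $[x, y] + [y, x] = 0$) to an element of $T^{(k+2)}$; no factor of $3$ is needed in this group.

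Third, the remaining ``middle'' pair $[u, x]c[v, y] + [u, y]c[v, x]$ is the crux. Commuting $c$ past $[u, x]$ and $[u, y]$ (the commutators $[c, [u, x]]$ and $[c, [u, y]]$ expand via Jacobi into left-normed commutators of length $k+2$), this pair is congruent modulo $T^{(k+2)}$ to $c \cdot \bigl( [u, x][v, y] + [u, y][v, x] \bigr)$. The key observation is that the derivation identity (\ref{[aabb]}) rearranges to
\[
[u, x][v, y] + [u, y][v, x] = [uv, x, y] - u[v, x, y] - [u, x, y]v,
\]
so this pair is further congruent to $c[uv, x, y] - cu[v, x, y] - c[u, x, y]v$. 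The first and third summands lie in $T^{(k+2)}$ after multiplication by $3$ by Corollary \ref{3[k][3]}. For the middle summand $cu[v, x, y]$, write $cu = uc + [c, u]$: then $[c, u] = [a_1, \dots, a_k, u] \in T^{(k+1)}$ gives $[c, u][v, x, y] \in T^{(k+2)}$ by Latyshev's inclusion (\ref{[m][n](mn2)}), while $3\,uc[v, x, y] \in T^{(k+2)}$ again by Corollary \ref{3[k][3]}.

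The main obstacle is the middle pair. A naive bound on $c \cdot ([u, x][v, y] + [u, y][v, x])$ via Latyshev's inclusion only places it in $T^{(k)}$, so one needs an additional structural identity. The resolution is to recognize the symmetric sum $[u, x][v, y] + [u, y][v, x]$ as precisely the combination that appears in the second-order derivation identity (\ref{[aabb]}), which rewrites it in terms of triple commutators and thereby unlocks the factor-$3$ bound of Corollary \ref{3[k][3]} (tracing back to the Jacobi identity).
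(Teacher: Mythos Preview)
Your proof is correct and rests on the same two pillars as the paper's: the second-order derivation identity (\ref{[aabb]}) to rewrite the symmetric sum $[u,x][v,y]+[u,y][v,x]$ as a combination of triple commutators, and Corollary~\ref{3[k][3]} to dispose of each $c\cdot[\,\text{triple}\,]$ with a factor of~$3$. The difference is one of economy. The paper first observes that $T^{(k)}$ is generated by the commutators $[a_1,\dots,a_k]$ already as a \emph{right} ideal, so it suffices to treat elements of the form $cu$ rather than $ucv$. Expanding $[cu,v,w]$ via (\ref{[aabb]}) then yields only four terms, of which $[c,v,w]u$ is trivially in $T^{(k+2)}$, $3\,c[u,v,w]\in T^{(k+2)}$ by Corollary~\ref{3[k][3]}, and the single pair $[c,v][u,w]+[c,w][u,v]$ is handled by exactly your symmetric-sum trick; there is no ``middle pair'' $[u,x]c[v,y]+[u,y]c[v,x]$ at all, and hence no need to commute $c$ past $[u,x]$ or to invoke Latyshev's inclusion for the $cu[v,x,y]$ term. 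Your nine-term expansion is the honest two-sided computation and recovers the same conclusion with more bookkeeping; the right-ideal reduction is worth remembering as a shortcut.
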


\begin{remark}
{ \rm
In general, for $\ell \in \mathbb Z$, $\ell \ne 0$,
\[
\ell \, [T^{(k)} , A] \nsubseteq T^{(k + 1)}.
\]
More precisely, if $k = 2k'$ is even then, in general,
\[
\ell \, [T^{(2k')} , A] \nsubseteq T^{(2k' + 1)}
\]
for any $\ell \in \mathbb Z$, $\ell \ne 0$. This can be easily deduced from (\ref{2m2n}).

On the other hand, if $k = 2k' + 1$ is odd then
\[
3 \, [T^{(2k' + 1)} , A] \subseteq T^{(2k' + 2)} ;
\]
however, to prove this one has to use Theorem \ref{theorem3[aa][bb]}.
}
\end{remark}

\begin{proof}[Proof of Lemma \ref{lemma3[TnAA]}]
By definition, $T^{(k)}$ is the two-sided ideal of $A$ generated by all commutators $[a_1, a_2, \dots , a_k]$ $(a_i \in A)$. However, one can easily check that $T^{(k)}$ is generated by the commutators $[a_1, a_2, \dots , a_k]$ $(a_i \in A)$ as a \textit{right ideal} in $A$  as well. Hence, to prove the lemma it suffices to prove that
\[
3 \, [cu, v, w] \in T^{(k+2)}
\]
where $u,v,w \in A$, $c = [a_1, a_2, \dots , a_k]$ $(a_i \in A)$. We have
\begin{equation}
\label{[cuvw]}
[cu, v, w] = c[u,v,w] + [c,v][u,w] + [c,w][u,v] + [c,v,w] u .
\end{equation}
By Corollary \ref{3[k][3]}, we have
\begin{equation}
\label{3c[uvw]}
3 \, c [u,v, w] \in T^{(k+2)}.
\end{equation}
It is clear that
\begin{equation}
\label{[cvw]u}
[c, v, w] u \in T^{(k+2)}.
\end{equation}
Further,
\[
[c,v] [u,w] + [c,w][u,v] = [v,c] [w, u] + [v,u] [w,c] + \bigl[ [c,w], [u,v] \bigr]
\]
where $\bigl[ [c,w], [u,v] \bigr]  \in T^{(k+3)} \subseteq T^{(k+2)}$ and, by (\ref{[aabb]}),
\begin{multline*}
[v,c] [w, u] + [v,u] [w,c] = [vw, c, u] - v[w, c, u] -  [v, c, u] w
\\
= - [c, vw, u] + v[c,w,u] + [c,v,u] w \in T^{(k+2)}.
\end{multline*}
Hence,
\begin{equation}
\label{[cv][uw]}
[c,v] [u,w] + [c,w][u,v] \in T^{(k+2)}.
\end{equation}
It follows from (\ref{[cuvw]})--(\ref{[cv][uw]}) that $3 \, [cu, v, w] \in T^{(k+2)}$, as required. The  proof of Lemma~\ref{lemma3[TnAA]} is completed.
\end{proof}

Now we are in a position to prove the theorem. We need to check that
\[
3 \, [a_1, \dots , a_m] [b_1, \dots , b_n] \in T^{(m+n-1)}
\]
if either $m$ or $n$ is odd. Since  $\bigl[ [a_1, \dots , a_m], [b_1, \dots , b_n] \bigr] \in T^{(m+n)} \subseteq T^{(m+n-1)}$, we may assume without loss of generality that $m$ is odd.

The proof is by induction on $m$. If $m=1$ then the theorem clearly holds. Let $m = 2k+1$, $k > 0$. Suppose that
\[
3 \, [a_1, \dots , a_{m-2}] [b_1, \dots , b_n] \in T^{(m+n-3)}
\]
for all $n>0$ and all $a_i, b_j \in A$.

Let $c_1 = [a_1, a_2, \dots , a_{m-2}]$, $c_2 = [b_1, b_2, \dots , b_n]$ $(a_i, b_j \in A)$. By (\ref{[aabb]}),   we have
\[
[c_1 c_2, v, w] = [c_1, v, w] c_2 + [c_1, v] [c_2, w] + [c_1, w] [c_2, v] + c_1 [c_2, v, w]
\]
so
\begin{equation}
\label{[ccvw]}
[c_1, v, w] c_2 = [c_1 c_2, v, w]  - \bigl( [c_1, v] [c_2, w] + [c_1, w] [c_2, v] \bigr) -   c_1 [c_2, v, w]
\end{equation}
for all $v,w \in A$.

By the induction hypothesis, $3 \, c_1 c_2 \in T^{(m + n -3)}$ so, by Lemma \ref{lemma3[TnAA]},
\begin{equation}
\label{3[ccvw]}
3 \, [c_1 c_2, v, w] \in 3 \, [T^{(m+n-3)}, A, A] \subset T^{(m+n-1)} .
\end{equation}
Again, by the induction hypothesis,
\begin{equation}
\label{3c[cvw]}
3 \, c_1 \, [c_2, v, w] \in T^{(m + n - 1)} .
\end{equation}
Further,
\[
 [c_1, v] [c_2, w] + [c_1, w][c_2, v] = [v, c_1] [w, c_2] + [v, c_2] [w, c_1] + \bigl[ [c_1, w], [c_2, v] \bigr]
\]
where $\bigl[ [c_1, w], [c_2, v] \bigr] \in T^{(m + n)} \subseteq T^{(m+ n -1)}$ and, by (\ref{[aabb]}),
\[
[v, c_1] [w, c_2] + [v, c_2] [w, c_1]  =  [v w, c_1, c_2] - v [w, c_1, c_2] - [v, c_1, c_2] w  \in T^{(m + n -1)} .
\]
Hence,
\begin{equation}
\label{[cv][cw]}
 [c_1, v] [c_2, w] + [c_1, w][c_2, v]  \in T^{(m + n -1)} .
\end{equation}
It follows from (\ref{[ccvw]})--(\ref{[cv][cw]}) that $3 \, [c_1, v, w] c_2 \in T^{(m + n - 1)} $ for all $v, w \in A$. Theorem \ref{theorem3[aa][bb]} follows.

\section*{Acknowledgment}

The second author was partially supported by CNPq grant 310331/2015-3.



\begin{thebibliography}{99}

\bibitem{AE15}
N. Abughazalah, P. Etingof, On properties of the lower central series of associative algebras, \textit{J.~Algebra Appl.} \textbf{15} (2016), 1650187 (24 pages),   arXiv:1508.00943 [math.RA].

\bibitem{BJ10}
A. Bapat, D. Jordan, Lower central series of free algebras in symmetric tensor categories, \textit{J.~Algebra} \textbf{373} (2013) 299--311, arXiv:1001.1375 [math.RA].

\bibitem{BEJKL12}
S. Bhupatiraju, P. Etingof, D. Jordan, W. Kuszmaul and J. Li, Lower central series of a free associative algebra over the integers and finite fields, \textit{J.~Algebra} 372 (2012) 251--274, arXiv:1203.1893 [math.RA].

\bibitem{CFZ15}
Cordwell, K., Fei, T., Zhou, K. On lower central series quotients of finitely generated algebras over $\mathbb Z$, \textit{J.~Algebra} 423 (2015) 559--572, arXiv:1309.1237 [math.RA].

\bibitem{CostaKras18}
E.A. da Costa, A. Krasilnikov, Relations in universal Lie nilpotent associative algebras of class $4$, \textit{Comm. Algebra} \textbf{46} (2018) 1367--1386, arXiv:1306.4294 [math.RA]/

\bibitem{Dangovski15}
R.R. Dangovski, On the maximal containments of lower central series ideals, arXiv:1509.08030 [math.RA].

\bibitem{DK15}
G. Deryabina, A. Krasilnikov, The torsion subgroup of the additive group of a Lie nilpotent associative ring of class $3$, \textit{J.~Algebra} \textbf{428} (2015) 230--255, arXiv:1308.4172 [math.RA].

\bibitem{DK17}
G. Deryabina, A. Krasilnikov, Products of commutators in a Lie nilpotent associative algebra, \textit{J.~Algebra} \textbf{469 } (2017) 84--95, arXiv:1509.08890 [math.RA].

\bibitem{DK17-2}
G. Deryabina, A. Krasilnikov, Products of several commutators in a Lie nilpotent associative algebra, \textit{Internat. J.~Algebra Comput.} \textbf{27} (2017) 1027--1040, arXiv:1610.03136 [math.RA].

\bibitem{EKM09}
P. Etingof, J. Kim, X. Ma, On universal Lie nilpotent associative algebras, \textit{J.~Algebra} \textbf{321} (2009) 697--703, arXiv:0805.1909 [math.RA].

\bibitem{Gordienko07}
A.S. Gordienko, Codimensions of commutators of length $4$, \textit{Russian Math. Surveys} \textbf{62} (2007) 187--188.

\bibitem{GP15}
A.V. Grishin, S.V. Pchelintsev, On the centers of relatively free algebras with an identity of Lie nilpotency, \textit{Sb.~Math.} \textbf{206} (2015) 1610--1627.

\bibitem{GL83}
N. Gupta, F. Levin, On the Lie ideals of a ring, \textit{J.~Algebra} \textbf{81} (1983) 225--231.

\bibitem{Kr13}
A. Krasilnikov, The additive group of a Lie nilpotent associative ring, \textit{J.~Algebra} \textbf{392} (2013) 10--22, arXiv:1204.2674 [math.RA].

\bibitem{Latyshev65}
V.N. Latyshev, On the finiteness of the number of generators of a T-ideal with an element $[x_1,x_2,x_3,x_4]$, \textit{Sibirsk. Mat.~Zh.} \textbf{6} (1965) 1432--1434 (in Russian).

\bibitem{LS85}
F. Levin, S. Sehgal, On Lie nilpotent group rings, \textit{J.~Pure Appl. Algebra} \textbf{37} (1985) 33--39.

\bibitem{MAP17}
I.Z. Monteiro Alves, V. Petrogradsky, Lie structure of truncated symmetric Poisson algebras, \textit{J.~Algebra} \textbf{488} (2017) 244--281, arXiv:1612.08051 [math.RA].

\bibitem{SS90}
R.K. Sharma, J.B. Srivastava, Lie ideals in group rings, \textit{J.~Pure Appl. Algebra} \textbf{63} (1990) 67--80.

\bibitem{Volichenko78}
I.B. Volichenko, The $T$-ideal generated by the element $[x_1, x_2, x_3, x_4]$, Preprint no. 22, \textit{Inst. Math. Acad. Sci. Beloruss. SSR} (1978) (in Russian).

\end{thebibliography}
\end{document}